\documentclass[11pt,reqno]{amsart}
\usepackage{amsmath}
\usepackage{amsthm}
\usepackage{amsfonts}
\usepackage{array}
\usepackage{booktabs}
\usepackage{tabularx}
\usepackage{mathabx}
\usepackage{graphicx}
\usepackage{amssymb}
\usepackage[english]{babel}
\usepackage[colorlinks = true,
            linkcolor = blue,
            urlcolor  = blue,
            citecolor = blue,
            anchorcolor = blue]{hyperref}

\usepackage{cleveref}
\usepackage{calc}
\usepackage{dsfont}
\usepackage{enumitem}
\usepackage{placeins}

\usepackage{graphicx}
\usepackage{subcaption}

\usepackage{float}

\usepackage{physics}
\usepackage{tikz}

\allowdisplaybreaks
\usepackage[a4paper, lmargin=0.15\paperwidth, rmargin=0.15\paperwidth,
            tmargin=0.1\paperheight, bmargin=0.1\paperheight]{geometry}
\usepackage[all]{nowidow}
\usepackage{lipsum}

\theoremstyle{plain}
\newtheorem{theorem}{Theorem}[section]

\newtheorem{lemma}[theorem]{Lemma}

\newtheorem{corollary}{Corollary}[section]

\theoremstyle{definition}

\theoremstyle{remark}

\newtheoremstyle{named}{}{}{\itshape}{}{\bfseries}{.}{.5em}{\thmnote{#3}#1}
\theoremstyle{named}

\setlist[enumerate]{itemsep=0pt, topsep=2pt}

\usepackage{algorithm}
\usepackage{algpseudocode}

\usepackage{etoolbox}

\makeatletter
\patchcmd{\@setauthors}
  {\MakeUppercase{\authors}}
  {\authors}
  {}{}
\makeatother

\newcommand{\Z}{\mathbb Z}
\newcommand{\card}[1]{\lvert #1\rvert}

\newcommand{\one}{\mathbf 1}

\title[Settling the Optimal Exponent Relating Sumsets and Difference Sets]
{Settling the Optimal Exponent\\Relating Sumsets and Difference Sets}

\author[Lin and Li]{%
  \begin{tabular}{@{}c@{\qquad\qquad}c@{}}
    Haowei Lin
    &
    Shanda Li
    \\
    {\normalfont\scriptsize Tencent Hunyuan}
    &
    {\normalfont\scriptsize Carnegie Mellon University}
    \\
    {\normalfont\scriptsize
      \href{mailto:linhaowei@pku.edu.cn}{%
        \texttt{linhaowei@pku.edu.cn}%
      }%
    }
    &
    {\normalfont\scriptsize
      \href{mailto:shandal@cs.cmu.edu}{%
        \texttt{shandal@cs.cmu.edu}%
      }%
    }
  \end{tabular}%
}

\date{}

\begin{document}

\begin{abstract}
\vspace{1em}
For a finite nonempty subset \(A\) of an abelian group, let $\sigma(A)=\card{A+A}/\card A$, $\delta(A)=\card{A-A}/\card A$.
The classical sum--difference inequalities state that $$\sigma(A)^{1/2}\le\delta(A)\le\sigma(A)^2.$$
The exponent $2$ in the second inequality is known to be optimal, whereas it has remained open whether the exponent $1/2$ in the first inequality can be improved.  We settle this question by constructing an explicit family of finite
sets $A_K\subset\Z$ such that
\[
 \frac{\log\sigma(A_K)}{\log\delta(A_K)}\longrightarrow 2,
\]
hence the exponent $1/2$ in the first inequality is also optimal.

The construction and its proof were developed with the assistance of Hyra, an AI research agent based on the open-weights Hy3 model.
\end{abstract}

\maketitle

\section{Introduction}\label{sec:introduction}

Let $A$ be a finite nonempty subset of an abelian group.  Its doubling and
difference constants are
\[
 \sigma(A)=\frac{\card{A+A}}{\card A},
 \qquad
 \delta(A)=\frac{\card{A-A}}{\card A}.
\]

The classical sum--difference inequalities state
\begin{equation}\label{eq:classical-sum-difference}
 \sigma(A)^{1/2}\le \delta(A)\le \sigma(A)^2.
\end{equation}

For a finite set $A\subset\Z$ with $\card A\ge2$, define
\[
 C(A):=\frac{\log\sigma(A)}{\log\delta(A)}.
\]
The problem is to determine how large $C(A)$ can be, or equivalently to determine the least \textit{universal} exponent $c$ for which $\sigma(A)\leq \delta(A)^c$. 
Note that \eqref{eq:classical-sum-difference} immediately implies that, for every finite set \(A\subset\Z\) with \(\card A\ge2\),
\begin{equation}\label{eq:upper-bound}
C(A)\leq 2.
\end{equation}

Georgiev, G\'omez-Serrano, Tao, and Wagner studied this question using
AlphaEvolve~\cite{GeorgievEtAl2025}. In their study, they used the
Freiman--Pigarev value
\[
 \frac{\log(59/17)}{\log(55/17)}
 =1.059793\ldots
\]
as a baseline, and their AI-assisted search found a construction with
$C(A)\approx1.1219$. Under the present normalization, however, Penman and
Wells had previously constructed a sequence $(Q_j)$ of finite integer sets
for which
\[
 C(Q_j)\geq 1.125944
\]
for all sufficiently large $j$; see
\cite{penmanWells2013restricted,cutlerPebodySarkar2024}.

We prove that the classical upper bound \eqref{eq:upper-bound} is optimal.  
More precisely, our main result is the following.

\begin{theorem}\label{thm:main}
For every positive even integer $K$, there is an explicitly defined finite set
$A_K\subset\Z$ such that
\[
 C(A_K)>\frac{2K}{K+3}.
\]
Consequently, the least universal exponent is $2$.
\end{theorem}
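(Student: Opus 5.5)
The plan is a two-stage argument. First, I will build a finite base configuration $B_K$ in a lattice $\Z^{m}$, with $m$ depending on $K$, whose sums are as spread out as possible while its differences collide heavily. Second, I will amplify it by a tensor-power/product argument and transport the result to $\Z$ by a Freiman isomorphism.

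\emph{Reduction.} Both $\sigma$ and $\delta$ are multiplicative under Cartesian products: $(B\times B')\pm(B\times B')=(B\pm B)\times(B'\pm B')$. Hence $C(B^{n})=C(B)$ for every $n$. So it suffices to exhibit one finite set $B_K\subset\Z^{m}$ with $C(B_K)>2K/(K+3)$, and powering is only needed if the construction instead gives a ratio $(\log|B+B|-\log|B|)/(\log|B-B|-\log|B|)$ with additive error terms to be washed out. To pass to $\Z$, I will map $(x_1,\dots,x_m)\mapsto\sum_i x_iN^{i-1}$ with $N$ larger than twice the coordinate range. This is a Freiman isomorphism of order $2$ on $B_K$, so it preserves $|B+B|$ and $|B-B|$. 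This step is routine.

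\emph{Construction.} The target $2K/(K+3)$ suggests a set mixing a large "bulk" of dimension about $K$ with a thin part. I want the following behaviour:
\begin{itemize}
\item the thin part, added to itself or to the bulk, produces roughly $t^{2}\card A$ new sums;
\item the differences are absorbed into a set of size about $t\card A$.
\end{itemize}
Concretely, I would take a box or simplex $P=[0,n)^{K}$ in $\Z^{K}$ together with $K$ lower-dimensional pieces $v_i+L_i$, each living in a sublattice of codimension one. The $v_i$ are chosen so that the differences $(v_i+L_i)-(v_j+L_j)$ and $(v_i+L_i)-P$ all land inside a common dilate of $P$. In contrast, the sums $v_i+v_j+L_i+L_j$ for $i\neq j$ are pairwise disjoint and fill out full $K$-dimensional boxes. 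The parity condition on $K$ enters when the pieces are paired up as $v_i$, $-v_{i}+c$, so that differences collapse while sums do not. Equivalently, the design is an asymmetric sign pattern: each difference $v_i-v_j$ must be a translate that is already covered, while each sum $v_i+v_j$ must be genuinely new.

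\emph{Counting.} I will compute $|B+B|$ from below by exhibiting explicitly disjoint pieces, e.g.\ $\binom{K}{2}$ boxes $v_i+v_j+L_i+L_j$, each of size $\asymp n^{K}$. I will compute $|B-B|$ from above by a containment $B-B\subseteq$ (a box of side $O(n)$) $\cup$ (few lower-dimensional pieces). Optimizing the relative sizes of $P$ and the $L_i$ (a single parameter $n$, or the ratio $\card{L_i}/\card P$) should give
\[
\log\sigma\approx \tfrac{2K}{K+3}\log\delta \quad\text{with strict inequality,}
\]
or at least a product-stable ratio exceeding the target. Then $K\to\infty$ gives exponent $2$, matching \eqref{eq:upper-bound}.

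\emph{Main obstacle.} The hard step is the difference-set upper bound. I must ensure that no cross-term $(v_i+L_i)-(v_j+L_j)$ escapes the common dilate, while still keeping the sums disjoint. This will constrain the choice of the $v_i$ and the shapes $L_i$ in a genuinely combinatorial way. My first attempt is to place the $v_i$ on a "sign-alternating" frame, with $v_{2k}=-v_{2k-1}+c$, which is where evenness of $K$ is used. I would then verify the two containments coordinate by coordinate before optimizing the parameters.
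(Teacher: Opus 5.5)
\textbf{There is a genuine gap.} The construction you sketch has no mechanism that can make $\sigma$ comparable to $\delta^2$. In fact, for large $K$ it provably gives $C\to1$, not $2$.

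Write $A=\bigcup_{i=1}^{r}A_i$ with $A_i=v_i+L_i$ and each $L_i$ a box. Here $r=K+1$, and $P$ counts as one piece. For a box, $-L_j$ is a translate of $L_j$, so $A_i-A_j$ is a translate of $A_i+A_j$. Hence
\[
 \card{A+A}\le\sum_{i\le j}\card{A_i+A_j}\le\binom{r+1}{2}\max_{i,j}\card{A_i-A_j}\le\binom{r+1}{2}\card{A-A},
\]
that is, $\sigma(A)\le\binom{K+2}{2}\delta(A)$. Choosing the translation vectors $v_i$ cleverly can buy at most this polynomial factor.

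On the other hand, let $A_{i^*}$ be the largest piece. Then $\card A\le(K+1)\card{A_{i^*}}$ and $\card{A-A}\ge\card{A_{i^*}-A_{i^*}}$. A box with $D$ sides of length at least $2$ has $\delta\ge(3/2)^D$. Your pieces are genuinely $K$- or $(K-1)$-dimensional, so $\delta(A)\ge(3/2)^{K-1}/(K+1)$. Therefore $C(A)\le1+\log\binom{K+2}{2}/\log\delta(A)=1+O(\log K/K)$.

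The sign-alternating frame actively works against you. With $v_{2k}=c-v_{2k-1}$, the \emph{sums} $v_{2k-1}+v_{2k}=c$ collide, not the differences. More generally, any $V$ with $V=c-V$ satisfies $V-V=(V+V)-c$, so $\card{V-V}=\card{V+V}$. Your Cartesian-power reduction and base-$N$ Freiman embedding are correct, but they leave $C$ unchanged and do not touch the core difficulty.

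\textbf{The missing idea} is an arithmetic, non-convex source of asymmetry: a set whose sumset is essentially everything while its difference set has vanishing density. The paper gets this from the digit set $Y_d=\{\sum_{i<d}w_i12^i:w_i\in W\}$ with $W=\{0,1,2,4,5,9\}$.
\begin{itemize}
\item $(W+W)\bmod12=\Z/12\Z$, so $(Y_d+Y_d)\bmod12^d$ is all of $\Z/12^d\Z$.
\item $(W-W)\bmod12$ misses $6$. A three-state carry automaton then shows $t_d=\card{(Y_d-Y_d)\bmod12^d}<\lambda^d$ with $\lambda/12<31/32$.
\end{itemize}
Inside $\Z/Q\Z\times\Z/n\Z$, the paper takes a full row $\{0\}\times\Z/n\Z$ together with thin rows $B\times Y_d$. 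Here $I=B\cup\{0\}$ is a symmetric additive basis of $\Z/Q\Z$ of size $2s-1\approx2\sqrt Q$.
\begin{itemize}
\item Sums of thin rows fill all $Q$ fibres, giving $\sigma>Q$.
\item Differences are full only over the fibres in $I$ and have negligible density elsewhere, giving $\delta<5\sqrt Q$.
\end{itemize}
The lift $R+q\{0,1\}$ transfers these bounds to $\Z$.

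Note also that $K$ in the bound $2K/(K+3)$ is $\log_2$ of the basis parameter $s=2^K+1$, not a dimension. The evenness of $K$ is used only to make $\gcd(Q,12)=1$, not for any sign pairing.
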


The sets $A_K$ are described in \eqref{eq:R-explicit} and \eqref{eq:A-definition}. 
The proof combines a base-$12$ digit gadget, a three-state carry automaton, a symmetric additive basis in a cyclic group, and a Chinese remainder construction. 
We include all details below. We also provide a lean-based formal proof at \url{https://github.com/linhaowei1/sum-diff-proof}.

\section{Proof}\label{sec:proof}

\subsection{A base-12 digit gadget}

Let
\[
 W=\{0,1,2,4,5,9\}\subset\Z.
\]
Reducing the corresponding sumset and difference set modulo $12$, a direct
calculation gives
\begin{equation}\label{eq:seed}
 (W+W)\bmod 12=\Z/12\Z,
 \qquad
 (W-W)\bmod 12=(\Z/12\Z)\setminus\{6\}.
\end{equation}
As an integer difference set,
\[
 \Delta:=W-W
 =\{-9,-8,-7,-5,-4,-3,-2,-1,0,1,2,3,4,5,7,8,9\}.
\]

Set $n_0=1, Y_0=\{0\}$. For an integer $j\ge1$, put
\[
 n_j=12^j,
 \qquad
 Y_j=
 \left\{
   \sum_{i=0}^{j-1}w_i12^i:w_i\in W
 \right\}
 \subseteq\{0,1,\ldots,n_j-1\}.
\]
Then
\begin{equation}\label{eq:Ysize}
 \card{Y_j}=6^j.
\end{equation}

\begin{lemma}\label{lem:full-sum}
For every $j\ge1$,
\[
 (Y_j+Y_j)\bmod n_j=\Z/n_j\Z.
\]
\end{lemma}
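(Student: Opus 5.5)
We argue by induction on $j$, using carries in base $12$ as the only mechanism linking digits. The base case $j=1$ is exactly the first identity in \eqref{eq:seed}, since $Y_1=W$ and $n_1=12$.

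For the inductive step, assume $(Y_j+Y_j)\bmod n_j=\Z/n_j\Z$ and fix a target residue $t\in\Z/n_{j+1}\Z$. Write its representative in $\{0,\dots,n_{j+1}-1\}$ as $t=t'+12^j d$ with $0\le t'<12^j$ and $0\le d<12$. Every element of $Y_{j+1}$ has the form $y+12^j w$ with $y\in Y_j$ and $w\in W$, so
\[
(y_1+12^jw_1)+(y_2+12^jw_2)=(y_1+y_2)+12^j(w_1+w_2).
\]
By the inductive hypothesis we can choose $y_1,y_2\in Y_j$ with $y_1+y_2\equiv t'\pmod{12^j}$. Since $0\le y_1+y_2\le 2(12^j-1)$, we have $y_1+y_2=t'+c\,12^j$ with carry $c\in\{0,1\}$. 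It then suffices to choose $w_1,w_2\in W$ with $w_1+w_2\equiv d-c\pmod{12}$, which the first identity of \eqref{eq:seed} allows for any residue. With these choices the total is $t'+12^j(c+w_1+w_2)\equiv t'+12^j d=t\pmod{12^{j+1}}$.

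There is no real obstacle. The one point needing care is that the carry $c$ is determined by the lower-digit choice, and the top digit must absorb it. This works because $W+W$ covers every residue modulo $12$, so the shifted residue $d-c$ is always attainable. Any overflow beyond $12^{j+1}$ is discarded by the reduction modulo $n_{j+1}$. Equivalently, one could phrase the argument as a direct digit-by-digit construction from the least significant digit upward, propagating a carry in $\{0,1\}$. The induction above is the same argument, packaged more cleanly.
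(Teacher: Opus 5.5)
Your proof is correct and is essentially the paper's argument: an induction on $j$ that lifts one base-$12$ digit at a time, using $(W+W)\bmod 12=\Z/12\Z$ to absorb the carry. The only cosmetic difference is that the paper splits off the lowest digit, $Y_j=W+12Y_{j-1}$, whereas you split off the top digit, $Y_{j+1}=Y_j+12^jW$; your bound $c\in\{0,1\}$ is harmless but not needed, since any integer carry can be absorbed.
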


\begin{proof}
Since \(Y_j=W+12Y_{j-1}\), the result follows by induction from
\[
(W+W)\bmod 12=\mathbb Z/12\mathbb Z,
\]
lifting one base-\(12\) digit at a time.
\end{proof}

We next estimate the modular difference set. For \(j\ge0\), define
\[
 t_j:=\card{(Y_j-Y_j)\bmod n_j}.
\]

\begin{lemma}\label{lem:difference-recurrence}
The sequence $t_j$ satisfies
\[
 t_0=1,\qquad t_1=11,
 \qquad
 t_{j+2}=13t_{j+1}-16t_j\quad(j\ge0).
\]
In particular, if
\[
 \lambda=\frac{13+\sqrt{105}}2,
 \qquad
 \mu=\frac{13-\sqrt{105}}2,
\]
then
\begin{equation}\label{eq:t-closed}
 t_j=
 \frac{\sqrt{105}+9}{2\sqrt{105}}\lambda^j
 +
 \frac{\sqrt{105}-9}{2\sqrt{105}}\mu^j
 <\lambda^j
 \qquad(j\ge1).
\end{equation}
\end{lemma}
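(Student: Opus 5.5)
The plan is to read an element of $(Y_j-Y_j)\bmod n_j$ digit by digit in base $12$ with a carry, and to count the accepted digit strings with a transfer matrix. Every element of $Y_j-Y_j$ is $\sum_{i<j}d_i12^i$ with $d_i\in\Delta$, and conversely every such sum lies in $Y_j-Y_j$. So a residue $x=\sum_{i<j}x_i12^i$ (with $0\le x_i\le 11$) lies in $(Y_j-Y_j)\bmod n_j$ iff there exist $d_i\in\Delta$ and carries $c_0=0,c_1,\dots,c_j$ with
\[
d_i+c_i=x_i+12c_{i+1}\qquad(0\le i<j),
\]
where the final carry $c_j$ is discarded. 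Since $\card{d_i}\le 9$, induction shows $\card{c_i+d_i}\le 10<12$, so every carry lies in $\{-1,0,1\}$. This is the three-state carry automaton. It is nondeterministic: for a given incoming carry $c$ and digit $x$, the allowed outgoing carries are the $k\in\{-1,0,1\}$ with $x-c+12k\in\Delta$. Thus $t_j$ is the number of words $x_0\cdots x_{j-1}$ over $\{0,\dots,11\}$ for which some carry path starting at $0$ survives.

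To count these words, I will determinize the automaton via the subset construction. The state after reading a prefix is the set $S\subseteq\{-1,0,1\}$ of reachable carries, starting from $S=\{0\}$, and a word is accepted iff $S\neq\emptyset$. First I tabulate the transitions from each single carry $c$ and each digit $x$, using the explicit list of $\Delta$. For example, from $c=0$ the digit $x=6$ is dead, $x\in\{0,1,2\}$ goes to $\{0\}$, $x\in\{3,4,5\}$ goes to $\{-1,0\}$, and $x\in\{7,8,9\}$ goes to $\{0,-1\}$ (via $d=x$ or $d=x-12$). From these I compute the reachable subsets and a transition-count matrix $M$, whose entry $M_{S,S'}$ is the number of digits sending $S$ to $S'$.

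Since $\Delta=-\Delta$, the map $x\mapsto -x$ with $c\mapsto -c$ is a symmetry, and I will use it to merge states that play symmetric roles (for instance $\{-1,0\}$ and $\{0,1\}$, if both occur). Then $t_j=e_{\{0\}}^{\top}M^j\mathbf 1$. The recurrence follows once I show that the minimal polynomial of $M$ restricted to the relevant cyclic subspace (generated by $\mathbf 1$ under $M$, or dually by $e_{\{0\}}$) is $x^2-13x+16$.

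I expect this step to be the main obstacle: finding the right lumping of the reachable subsets into two aggregate classes whose counts form a closed $2\times2$ linear system with trace $13$ and determinant $16$. I will first try to group the states as ``carry set is exactly $\{0\}$'' versus ``carry set contains a nonzero carry'' and check that row sums are constant on each class. If that fails, I will simply compute the full characteristic polynomial and factor out the spurious roots.

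As sanity checks, $t_1=11$ agrees with \eqref{eq:seed}, and $t_2$ should equal $13\cdot 11-16=127$, which can be confirmed by direct count.

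The closed form \eqref{eq:t-closed} is then routine: $\lambda,\mu$ are the roots of $x^2-13x+16$, and the coefficients are fixed by $t_0=1$, $t_1=11$. For the inequality, write $a=\frac{\sqrt{105}+9}{2\sqrt{105}}$ and $b=\frac{\sqrt{105}-9}{2\sqrt{105}}$, so that $a+b=1$. Since $0<\mu<\lambda$ and $b>0$, for $j\ge1$ we get $t_j=a\lambda^j+b\mu^j<a\lambda^j+b\lambda^j=\lambda^j$.
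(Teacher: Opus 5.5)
Your plan is the paper's proof: a carry automaton, the subset construction, a $3\times 3$ transfer matrix $M$ with $t_j=e_{\{0\}}^{\mathsf T}M^j\one$, and the check $(M^2-13M+16I)\one=0$. It goes through; the only difference is that the paper reads balanced digits $\varepsilon_i\in\{-6,\dots,5\}$, whereas you read digits $0\le x_i\le 11$. In your convention, however, several of the details you anticipate are wrong.

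\begin{itemize}
\item Since $x_i\ge 0$, every carry lies in $\{-1,0\}$, so $\{0,1\}$ never occurs. The reachable states are $\{0\},\{-1\},\{-1,0\}$, and in that order
\[
 M=\begin{pmatrix}3&2&6\\2&3&6\\2&2&8\end{pmatrix},
 \qquad M\one=(11,11,12)^{\mathsf T},
 \qquad M^2\one=(127,127,140)^{\mathsf T}.
\]
Hence $(M^2-13M+16I)\one=0$, although the characteristic polynomial is $(x-1)(x^2-13x+16)$.

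\item Your first lumping (``exactly $\{0\}$'' versus ``contains a nonzero carry'') fails: $\{-1\}$ sends $9$ digits into the second class, but $\{-1,0\}$ sends $10$.

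\item The symmetry $x\mapsto -x$, $c\mapsto -c$ does not act on the alphabet $\{0,\dots,11\}$. The correct one is $x\mapsto 11-x$, $c\mapsto -1-c$. It swaps $\{0\}$ and $\{-1\}$, which justifies lumping the two singletons against $\{-1,0\}$. The lumped matrix is $\bigl(\begin{smallmatrix}5&6\\4&8\end{smallmatrix}\bigr)$, with trace $13$ and determinant $16$. In the paper's balanced convention it is your original grouping that works, and it gives the same $2\times 2$ matrix.

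\item The ``dual'' route through $e_{\{0\}}$ is not available: $e_{\{0\}}^{\mathsf T}(M^2-13M+16I)=(2,-2,0)\neq 0$ (in the paper's normalization it is $(0,3,-3)$). You must work in the cyclic subspace generated by $\one$, as the paper does.
\end{itemize}

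The closed form and the bound $t_j<\lambda^j$ are handled correctly.
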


\begin{proof}
Every residue modulo $12^j$ has a unique balanced expansion
\[
 \sum_{i=0}^{j-1}\varepsilon_i12^i,
 \qquad
 \varepsilon_i\in E:=\{-6,-5,\ldots,5\}.
\]
The residue represented by such a balanced digit string belongs to
\((Y_j-Y_j)\bmod 12^j\) if and only if there are carries
\[
 c_0=0,
 \qquad
 c_i\in\{-1,0,1\},
\]
such that
\begin{equation}\label{eq:carry-equation}
 \delta_i=\varepsilon_i+c_i-12c_{i+1}\in\Delta
 \qquad(0\le i<j).
\end{equation}
Indeed, summing \eqref{eq:carry-equation} after multiplication by $12^i$
shows that the two digit strings differ by $-c_j12^j$.

After any balanced prefix, the only possible nonempty sets of current carries
are
\[
 X_1=\{0\},
 \qquad
 X_2=\{-1,0\},
 \qquad
 X_3=\{0,1\}.
\]
For a current carry-set $X$ and a balanced digit $\varepsilon$, define
\[
 \Phi_\varepsilon(X)=
 \left\{
 c'\in\{-1,0,1\}:
 \varepsilon+c-12c'\in\Delta
 \text{ for some }c\in X
 \right\}.
\]
A check of the twelve possible values of $\varepsilon$ gives the transition
counts
\begin{equation}\label{eq:transition-table}
\begin{array}{c|ccc}
 &X_1&X_2&X_3\\ \hline
 X_1&5&3&3\\
 X_2&4&5&3\\
 X_3&4&4&4
\end{array}
\end{equation}
where rows are current states and columns are next states.  Thus, with
\[
 M=
 \begin{pmatrix}
 5&3&3\\
 4&5&3\\
 4&4&4
 \end{pmatrix},
 \qquad
 e_1=\begin{pmatrix}1\\0\\0\end{pmatrix},
 \qquad
 \one=\begin{pmatrix}1\\1\\1\end{pmatrix},
\]
we have
\[
 t_j=e_1^{\mathsf T}M^j\one.
\]
A direct multiplication gives
\[
 M^2-13M+16I_3
 =
 \begin{pmatrix}
 0&3&-3\\
 0&0&0\\
 0&-4&4
 \end{pmatrix},
 \qquad
 (M^2-13M+16I_3)\one=0.
\]
Therefore
\[
 t_{j+2}-13t_{j+1}+16t_j=0.
\]
The initial values are $t_0=1$ and $t_1=11$.  Solving the recurrence gives
\eqref{eq:t-closed}.  Both coefficients in \eqref{eq:t-closed} are positive
and sum to $1$, while $0<\mu<\lambda$, so $t_j<\lambda^j$ for $j\ge1$.
\end{proof}

The following elementary numerical estimate will be useful.

\begin{lemma}\label{lem:lambda-bound}
We have
\[
 \frac{\lambda}{12}<\frac{31}{32},
 \qquad
 \left(\frac{31}{32}\right)^{22}<\frac12.
\]
\end{lemma}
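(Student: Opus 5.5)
Both inequalities are elementary numerical facts, and the plan is to verify each by clearing denominators and reducing to integer comparisons.

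For the first inequality, $\lambda/12<31/32$ is equivalent to $\lambda<12\cdot 31/32=93/8$. Since $\lambda=(13+\sqrt{105})/2$, this becomes
\[
13+\sqrt{105}<93/4,
\qquad\text{that is,}\qquad
\sqrt{105}<41/4 .
\]
Both sides are positive, so squaring is allowed, and the condition becomes $105<1681/16$, i.e.\ $1680<1681$. This holds, with very little room to spare, which is why the constant $31/32$ was chosen.

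For the second inequality, $(31/32)^{22}<1/2$ is equivalent to $2\cdot 31^{22}<32^{22}=2^{110}$. The clean route is through logarithms and $\log(1-x)\le -x$:
\[
22\log\tfrac{31}{32}=22\log\bigl(1-\tfrac1{32}\bigr)\le-\tfrac{22}{32}=-0.6875,
\]
while $\log 2<0.6932$, so $-0.6875<-\log 2$ fails. The bound is therefore too weak.

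To fix this, I will keep the next term of the series:
\[
-\log(1-x)\ge x+\tfrac{x^2}{2}.
\]
With $x=1/32$ this gives $22\bigl(\tfrac1{32}+\tfrac1{2048}\bigr)=0.6875+0.01074\ldots=0.6982\ldots>0.6932>\log 2$. Hence $(31/32)^{22}<e^{-0.698}<1/2$.

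Alternatively, one can argue without logarithms by iterated squaring of rational upper bounds:
\[
(31/32)^2=961/1024,\quad\ldots,\quad(31/32)^{16}\approx 0.6014,
\]
then multiply by $(31/32)^4\approx0.8807$ and $(31/32)^2\approx 0.9385$, giving about $0.497<1/2$. Each step can be certified by rounding upward to fractions with small denominators.

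The main obstacle is the second inequality, because the margin is thin (about $0.497$ against $0.5$). The first-order estimate fails, so the proof must either keep the quadratic term of $-\log(1-x)$ or do careful rational rounding. I would present the logarithmic version with the quadratic term together with the explicit bound $\log 2<0.6932$. As a fallback I would note the exact integer check $2\cdot31^{22}<2^{110}$, which is also what the Lean formalization would verify by computation.
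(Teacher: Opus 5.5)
Your verification is correct and is the same direct computation the paper intends (its proof is just ``Straightforward calculation''). The reduction of the first inequality to $1680<1681$ is sound, and so is the second-order bound $-\log(1-x)\ge x+x^2/2$, which gives $22\bigl(\tfrac1{32}+\tfrac1{2048}\bigr)\approx0.6982>\log 2$. One small numerical slip in your optional squaring route: $(31/32)^{16}\approx0.6017$, not $0.6014$, but the final product $\approx0.4973<1/2$ is unaffected.
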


\begin{proof}
Straightforward calculation.
\end{proof}

\subsection{A symmetric additive basis in a cyclic group}

Fix a positive even integer $K$ and set
\begin{equation}\label{eq:parameters1}
 s=2^K+1,
 \qquad
 Q=s^2,
 \qquad
 m=\frac{s-1}{2}.
\end{equation}
Inside $\Z/Q\Z$, define
\[
 H=\{-m,-m+1,\ldots,m\},
 \qquad
 V=\{0,s,2s,\ldots,(s-1)s\},
\]
and put
\[
 I=H\cup V,
 \qquad
 B=I\setminus\{0\}.
\]
Since $H\cap V=\{0\}$,
\begin{equation}\label{eq:I-size}
 \rho:=\card I=2s-1,
 \qquad
 \card B=2s-2.
\end{equation}
Both $H$ and $V$ are symmetric, so $I=-I$ in $\mathbb Z/Q\mathbb Z$.

\begin{lemma}\label{lem:outer-basis}
The set $I$ is an additive basis of $\Z/Q\Z$:
\[
 I+I=\Z/Q\Z.
\]
Moreover, every $x\notin I$ lies in both $B+B$ and $B-B$.
\end{lemma}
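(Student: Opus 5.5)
The plan is to use the "mixed radix" structure of $\Z/Q\Z$ with $Q=s^2$. Every residue $x\in\Z/Q\Z$ can be written uniquely as $x=as+b$ with $b\in H=\{-m,\dots,m\}$ and $a\in\{0,1,\dots,s-1\}$. This works because $H$ is a complete residue system modulo $s$ (it has $2m+1=s$ elements), and $V$ consists of all multiples of $s$ modulo $Q$. Hence $x=as+b$ with $as\in V$ and $b\in H$, which gives $I+I\supseteq V+H=\Z/Q\Z$. That settles the first claim.

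For the second claim, take $x\notin I$ and write $x=as+b$ as above. Since $x\notin I$, we have $b\neq0$ (otherwise $x\in V$) and $a\neq0$ (otherwise $x\in H$). So $as\in B$ and $b\in B$, and therefore $x=as+b\in B+B$.

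For $B-B$, use the symmetry of $H$: $b\in H\setminus\{0\}$ implies $-b\in H\setminus\{0\}\subseteq B$. Then $x=as-(-b)$ with $as\in B$ and $-b\in B$, so $x\in B-B$. One could equally use symmetry of $V$, writing $x=b-(-as)$ with $-as=(s-a)s\in V\setminus\{0\}$.

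The only point needing care is that the summands are genuinely nonzero elements of $B$. This reduces to checking that $H\cap V=\{0\}$ in $\Z/Q\Z$, which the paper already notes: nonzero elements of $H$ have absolute value at most $m<s$, so they are not multiples of $s$ mod $Q$. I expect no real obstacle beyond verifying the uniqueness of the decomposition $x=as+b$.
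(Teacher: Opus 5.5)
Your proposal is correct and follows essentially the same argument as the paper: write $x$ as its balanced residue modulo $s$ (in $H$) plus a multiple of $s$ (in $V$), and observe that both summands are nonzero when $x\notin I$; for $B-B$ the paper uses the symmetry of $V$, which you also mention as an alternative. One small correction: the two summands are nonzero directly because $x\notin I$, and nonzero elements of $H\cup V$ lie in $B$ by definition, so $H\cap V=\{0\}$ (and uniqueness of the decomposition) is not actually needed here.
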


\begin{proof}
Given $x\in\Z/Q\Z$, let $h\in H$ be the unique balanced representative of
$x$ modulo $s$.  Then $v=x-h$ is divisible by $s$, so $v\in V$, and
$x=h+v$.  Hence $I+I=\Z/Q\Z$.

If $x\notin I$, then neither $h$ nor $v$ is zero.  Thus $h,v\in B$ and
$x=h+v\in B+B$.  Since $-v\in B$, we also have $x=h-(-v)\in B-B$.
\end{proof}

\subsection{The CRT construction}

For a fixed positive even integer $K$, set
\begin{equation}\label{eq:parameters2}
 d=22(K+2),
 \qquad
 n=12^d,
 \qquad
 q=Qn,
 \qquad
 Y=Y_d,
 \qquad
 t=t_d.
\end{equation}
Because $K$ is even, $2^K\equiv1\pmod3$; $s=2^K+1\equiv2\pmod3$. 
Also $s$ is odd.  Therefore $\gcd(Q,n)=1$, and the Chinese remainder theorem
gives
\[
 \Z/q\Z\cong \Z/Q\Z\times\Z/n\Z.
\]

In this product group define
\begin{equation}\label{eq:Rcal}
 \mathcal R=
 \bigl(\{0\}\times\Z/n\Z\bigr)
 \cup
 \bigl(B\times Y\bigr).
\end{equation}
Let $R\subseteq\{0,1,\ldots,q-1\}$ be the set of least nonnegative
representatives of the inverse image of $\mathcal R$.  Equivalently,
\begin{equation}\label{eq:R-explicit}
\begin{split}
 R={}&\{x\in\Z:0\le x<q,\ x\equiv0\pmod Q\}\\
 &\cup
 \{x\in\Z:0\le x<q,\ x\bmod Q\in B,\ x\bmod n\in Y\}.
\end{split}
\end{equation}
Finally define the finite integer set $A_K$ (abbreviated to $A$ when $K$ is
fixed) by
\begin{equation}\label{eq:A-definition}
 \boxed{A_K=R\cup(R+q)=R+q\{0,1\}.}
\end{equation}
The two copies are disjoint, and hence
\begin{equation}\label{eq:A-size}
 \card A=2\card R
 =2\bigl(n+(\rho-1)6^d\bigr).
\end{equation}

\subsection{Exact modular counts}

\begin{lemma}\label{lem:mod-sum}
Modulo $q$, the sumset of $R$ is the whole group:
\[
 \card{(R+R)\bmod q}=q=Qn.
\]
\end{lemma}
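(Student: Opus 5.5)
Since the reduction map $R\to\mathcal R$ is a bijection, it suffices to show that $\mathcal R+\mathcal R$ is all of $\Z/Q\Z\times\Z/n\Z$ under the CRT identification. So we fix an arbitrary target $(x,y)$ with $x\in\Z/Q\Z$ and $y\in\Z/n\Z$, and exhibit it as a sum of two elements of $\mathcal R$. The argument splits according to whether $x$ lies in $I$ or outside it.

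\emph{Case $x\in I$.}
\begin{itemize}
\item If $x=0$, write $(0,y)=(0,y)+(0,0)$. Both summands lie in $\{0\}\times\Z/n\Z\subseteq\mathcal R$.
\item If $x\in B$, pick any $y_0\in Y$ (for instance $y_0=0$). Then
\[
 (x,y)=(x,y_0)+(0,y-y_0),
\]
where $(x,y_0)\in B\times Y$ and $(0,y-y_0)\in\{0\}\times\Z/n\Z$.
\end{itemize}

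\emph{Case $x\notin I$.} By Lemma~\ref{lem:outer-basis}, $x=b_1+b_2$ with $b_1,b_2\in B$. By Lemma~\ref{lem:full-sum}, applied with $j=d$, we have $(Y+Y)\bmod n=\Z/n\Z$, so $y\equiv y_1+y_2\pmod n$ for some $y_1,y_2\in Y$. Then
\[
 (x,y)=(b_1,y_1)+(b_2,y_2),
\]
and both summands lie in $B\times Y\subseteq\mathcal R$.

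The three cases cover $\Z/Q\Z=\{0\}\cup B\cup(\Z/Q\Z\setminus I)$. Hence $(\mathcal R+\mathcal R)$ is the whole group, and transporting back through the CRT isomorphism gives $(R+R)\bmod q=\Z/q\Z$, which has size $Qn$.

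There is no real obstacle here. The only point that needs care is that the product decomposition respects addition: the CRT map $\Z/q\Z\to\Z/Q\Z\times\Z/n\Z$ is a group isomorphism, so sums modulo $q$ correspond to componentwise sums. Everything else follows directly from Lemmas~\ref{lem:full-sum} and~\ref{lem:outer-basis}.
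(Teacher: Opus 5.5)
Your proof is correct and matches the paper's argument: you split the outer coordinate into $0$, $B$, and the complement of $I$, use the full zero row in the first two cases, and use Lemmas~\ref{lem:outer-basis} and~\ref{lem:full-sum} in the third. Your choice $y_0=0$ is valid because $0\in W$ implies $0\in Y$, and you make explicit the bijection $R\to\mathcal R$ and the additivity of the CRT map, both of which the paper leaves implicit.
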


\begin{proof}
Work in $\Z/Q\Z\times\Z/n\Z$.  For outer coordinate $0$, the sum of the
full zero row with itself is full.  For an outer coordinate $b\in B$, the sum
of the full zero row and the row $\{b\}\times Y$ is full in the inner
coordinate.  Finally, if $x\notin I$, Lemma~\ref{lem:outer-basis} gives
$x=b_1+b_2$ with $b_1,b_2\in B$, and Lemma~\ref{lem:full-sum} gives
$(Y+Y)\bmod n=\Z/n\Z$. Thus every outer coordinate has a full inner fiber.
\end{proof}

\begin{lemma}\label{lem:mod-diff}
Modulo \(q\), the difference set of \(R\) has cardinality
\[
 \card{(R-R)\bmod q}=\rho n+(Q-\rho)t.
\]
\end{lemma}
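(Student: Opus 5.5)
We will compute $(R-R)\bmod q$ fiber by fiber in $\Z/Q\Z\times\Z/n\Z$. For each outer coordinate $x\in\Z/Q\Z$ we determine the inner fiber
\[
F_x=\{y\in\Z/n\Z:(x,y)\in \mathcal R-\mathcal R\}.
\]
The claim then reads $\card{F_x}=n$ for $x\in I$ and $\card{F_x}=t$ for $x\notin I$. Summing over the $\rho$ elements of $I$ and the $Q-\rho$ elements of its complement gives $\rho n+(Q-\rho)t$.

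\emph{Step 1: decompose $\mathcal R-\mathcal R$.} Write $\mathcal R$ as the union of the zero row $Z=\{0\}\times\Z/n\Z$ and the rows $\{b\}\times Y$ for $b\in B$. A difference of two elements lies in one of three kinds of product:
\begin{itemize}
\item $Z-Z=\{0\}\times\Z/n\Z$;
\item $Z-(\{b\}\times Y)$ or $(\{b\}\times Y)-Z$, which equal $\{\mp b\}\times\Z/n\Z$;
\item $(\{b_1\}\times Y)-(\{b_2\}\times Y)=\{b_1-b_2\}\times\bigl((Y-Y)\bmod n\bigr)$.
\end{itemize}
Since $B=-B$, the second kind gives full fibers exactly over the outer coordinates in $B$. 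Together with the first kind, every $x\in B\cup\{0\}=I$ has $F_x=\Z/n\Z$.

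\emph{Step 2: outer coordinates outside $I$.} For $x\notin I$, only the third kind of product contributes, so $F_x$ is either empty or equal to $(Y-Y)\bmod n$. By Lemma~\ref{lem:outer-basis}, $x\in B-B$, so $F_x$ is nonempty. Hence $F_x=(Y-Y)\bmod n$, and with $Y=Y_d$ and $n=n_d$ this set has cardinality $t_d=t$ by definition.

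Summing the fiber sizes completes the count. I do not expect any real obstacle here. The only point needing care is checking that, for $x\notin I$, no full row contributes to $F_x$. This holds because the first two kinds of product only reach outer coordinates in $\{0\}\cup B$, and $\{0\}\cup B=I$ since $0\in H\subseteq I$.
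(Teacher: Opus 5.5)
Your proof is correct and follows essentially the same argument as the paper. Both work fiber by fiber in the CRT product: differences involving the zero row give full fibers over $I=\{0\}\cup B$, and for $x\notin I$ only differences of two nonzero rows contribute. Each such $x$ therefore has fiber exactly $(Y-Y)\bmod n$ of size $t$, and this fiber is nonempty because $x\in B-B$ by Lemma~\ref{lem:outer-basis}.
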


\begin{proof}
Again work in the CRT product.  The outer coordinates in $I$ have full inner
fibers.  For outer coordinate $0$, use the full zero row minus itself.  For
$b\in B$, use the row $\{b\}\times Y$ minus the full zero row; symmetry gives
the same conclusion for all of $I=-I$.

Now let $x\notin I$.  Any difference with outer coordinate $x$ must use two
nonzero rows, because using the zero row would give an outer coordinate in
$I$.  Hence its inner fiber is contained in $Y-Y$.  Conversely,
Lemma~\ref{lem:outer-basis} gives $x=b_1-b_2$ with $b_1,b_2\in B$, so every
element of $Y-Y$ occurs.  Thus each of the $Q-\rho$ outer coordinates outside
$I$ has an inner fiber of size exactly $t$.
\end{proof}

\subsection{Lifting the modular counts to integers}

\begin{lemma}\label{lem:lifting}
Let $R\subseteq\{0,1,\ldots,q-1\}$ and let $A=R+q\{0,1\}$.  Then
\[
 \card{A+A}\ge3\card{(R+R)\bmod q},
 \qquad
 \card{A-A}\le4\card{(R-R)\bmod q}.
\]
\end{lemma}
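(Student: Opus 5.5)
We decompose the integers in $A+A$ and $A-A$ according to their residue modulo $q$ and their ``block index'' (the quotient by $q$). Since $R\subseteq\{0,\dots,q-1\}$, every element of $A$ has the form $r+\epsilon q$ with $r\in R$ and $\epsilon\in\{0,1\}$.

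\emph{Lower bound for the sumset.} Fix a residue class $c\in(R+R)\bmod q$ and choose $r_1,r_2\in R$ with $r_1+r_2\equiv c\pmod q$. Then the three integers
\[
 r_1+r_2,\qquad r_1+r_2+q,\qquad r_1+r_2+2q
\]
all lie in $A+A$, coming from $\epsilon_1+\epsilon_2=0,1,2$. They are distinct and all congruent to $c$ modulo $q$. Integers in different residue classes are distinct, so counting over the residue classes $c$ gives $\card{A+A}\ge 3\card{(R+R)\bmod q}$.

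\emph{Upper bound for the difference set.} An element of $A-A$ has the form $(r_1-r_2)+kq$ with $k=\epsilon_1-\epsilon_2\in\{-1,0,1\}$. Here $r_1-r_2\in(-q,q)$, so the element lies in $(-2q,2q)$. Every integer $z\in(-2q,2q)$ with a prescribed residue $c$ modulo $q$ lies in a set of at most four values, namely $\{c_0-2q,\,c_0-q,\,c_0,\,c_0+q\}$, where $c_0\in\{0,\dots,q-1\}$ is the least nonnegative representative; the value $c_0-2q$ occurs only when $c_0>0$. Moreover, every element of $A-A$ has residue in $(R-R)\bmod q$. Hence $A-A$ meets at most $\card{(R-R)\bmod q}$ residue classes, with at most four integers in each, and so $\card{A-A}\le 4\card{(R-R)\bmod q}$.

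The only point requiring care is the range bookkeeping in the difference bound. We must confirm that $A-A\subseteq(-2q,2q)$ strictly, which holds because $\card{r_1-r_2}\le q-1$ gives $\card{(r_1-r_2)+kq}\le 2q-1$. An open interval of length $4q$ contains at most four integers from each residue class modulo $q$. Everything else is direct.
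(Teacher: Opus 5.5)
Your proof is correct and follows the paper's argument. The sumset bound is the same one-representative-plus-$\{0,q,2q\}$ shift, and your observation that $A-A\subseteq(-2q,2q)$ meets each residue class in at most the four integers $c_0-2q,\,c_0-q,\,c_0,\,c_0+q$ is the paper's count of quotient indices $\{-1,0\}+\{-1,0,1\}=\{-2,-1,0,1\}$, phrased as interval bookkeeping rather than index-set bookkeeping.
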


\begin{proof}
Fix a residue $c\pmod q$ occurring in $R+R$.  Its ordinary representatives in
$R+R$ have the form $c+kq$ with $k$ in a nonempty subset of $\{0,1\}$.  The
block sums from $q\{0,1\}+q\{0,1\}$ have indices $\{0,1,2\}$.  Therefore the
integers in $A+A$ above the residue $c$ have at least three distinct quotient
indices.  Summing over residues proves the first inequality.

For differences, the ordinary representatives in $R-R$ above a fixed residue
have quotient indices in a nonempty subset of $\{-1,0\}$, while the block
differences have indices $\{-1,0,1\}$.  The sum of these two index sets has at
most four elements.  Summing over the modular difference residues proves the
second inequality.
\end{proof}

\subsection{The final estimate}

Define the two small parameters
\[
 \alpha=\frac{(\rho-1)6^d}{n},
 \qquad
 \beta=\frac{t}{n}.
\]
Using \eqref{eq:I-size}, \eqref{eq:parameters1}, and
\eqref{eq:parameters2}, we obtain
\begin{equation}\label{eq:alpha}
 \alpha=(2s-2)2^{-d}=2^{K+1-d}=2^{-21K-43}<\frac12.
\end{equation}
By Lemmas~\ref{lem:difference-recurrence} and~\ref{lem:lambda-bound},
\begin{equation}\label{eq:beta}
 \beta
 <\left(\frac{\lambda}{12}\right)^d
 <\left(\frac{31}{32}\right)^{22(K+2)}
 <2^{-(K+2)}.
\end{equation}

\begin{proof}[Proof of Theorem~\ref{thm:main}]
By Lemma~\ref{lem:mod-sum}, Lemma~\ref{lem:lifting}, and
\eqref{eq:A-size},
\[
 \sigma(A)
 =\frac{\card{A+A}}{\card A}
 \ge\frac{3Qn}{2n(1+\alpha)}
 =\frac{3Q}{2(1+\alpha)}
 >Q=s^2,
\]
where the last inequality uses $\alpha<1/2$.

Likewise, Lemmas~\ref{lem:mod-diff} and~\ref{lem:lifting} give
\begin{align*}
 \delta(A)
 &\le
 \frac{4\bigl(\rho n+(Q-\rho)t\bigr)}{2n(1+\alpha)}\\
 &=\frac{2\bigl(\rho+(Q-\rho)\beta\bigr)}{1+\alpha}
 <2\rho+2Q\beta.
\end{align*}
Since $s=2^K+1<2^{K+1}$, \eqref{eq:beta} implies
\[
 Q\beta
 <\frac{s^2}{2^{K+2}}
 <\frac{s}{2}.
\]
As $\rho=2s-1$, it follows that
\[
 \delta(A)<2(2s-1)+s=5s-2<5s<8s.
\]
Therefore
\[
 C(A)
 =\frac{\log\sigma(A)}{\log\delta(A)}
 >\frac{2\log s}{\log(8s)}
 =\frac{2\log s}{\log s+3\log2}.
\]
The function
\[
 x\longmapsto\frac{2x}{x+3\log2}
\]
is strictly increasing for $x>0$, and $\log s>K\log2$.  Hence
\[
 C(A)>\frac{2K\log2}{K\log2+3\log2}
 =\frac{2K}{K+3}.
\]
Letting the positive even integer $K$ tend to infinity shows that no universal
exponent smaller than $2$ can work, while the classical upper bound in
\eqref{eq:classical-sum-difference} shows that the exponent $2$ does work.
\end{proof}

\begin{corollary}\label{cor:supremum}
Among finite subsets $A\subset\Z$ with $\card A\ge2$,
\[
 \sup_A C(A)=2.
\]
\end{corollary}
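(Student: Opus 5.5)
The corollary follows by combining two bounds that are already available, one for each direction of the supremum.

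For the upper bound, take any finite $A\subset\Z$ with $\card A\ge2$. Then $\card{A-A}\ge 2\card A-1>\card A$, so $\delta(A)>1$ and $\log\delta(A)>0$. The quantity $C(A)$ is therefore well defined. The left inequality in \eqref{eq:classical-sum-difference}, namely $\sigma(A)\le\delta(A)^2$, then gives $\log\sigma(A)\le 2\log\delta(A)$. Dividing by $\log\delta(A)>0$ yields $C(A)\le 2$, which is \eqref{eq:upper-bound}. Hence $\sup_A C(A)\le 2$.

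For the lower bound, apply Theorem~\ref{thm:main} with each positive even integer $K$. It produces sets $A_K\subset\Z$ with $C(A_K)>2K/(K+3)$. Each $A_K$ has $\card{A_K}\ge2$, as \eqref{eq:A-size} shows, so it is admissible in the supremum. Since $2K/(K+3)\to2$ as $K\to\infty$, for every $\varepsilon>0$ some $A_K$ satisfies $C(A_K)>2-\varepsilon$. Therefore $\sup_A C(A)\ge2$, and combined with the upper bound the supremum equals $2$.

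Neither step presents a real obstacle, since all the difficulty lies in Theorem~\ref{thm:main}. The only thing to check is that $C(A)$ is defined, i.e.\ that $\delta(A)>1$ whenever $\card A\ge2$; this is the elementary bound $\card{A-A}\ge2\card A-1$ for integer sets.
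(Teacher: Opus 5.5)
Your proof is correct and is essentially the paper's argument. The lower bound comes from Theorem~\ref{thm:main} exactly as in the paper. For the upper bound, you use the non-strict classical bound \eqref{eq:upper-bound}, after checking that $\log\delta(A)>0$, whereas the paper cites Staps' equality characterization to get the strict inequality $C(A)<2$; that strictness only matters for non-attainment, so your version suffices for the supremum.
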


\begin{proof}
The equality characterization of Staps~\cite{Staps2015} gives $C(A)<2$ for
every admissible $A$.  On the other hand, Theorem~\ref{thm:main} gives a family
with
\[
 C(A_K)>\frac{2K}{K+3}\longrightarrow2
\]
as the positive even integer $K$ tends to infinity.
\end{proof}

\section{Use of AI Tools}\label{sec:ai-tools}

\begin{table}[!ht]
\centering
\caption{Published and manuscript-internal result comparison.}
\vspace{-0.5em}
\label{tab:search-milestones}
\scriptsize
\setlength{\tabcolsep}{3pt}
\renewcommand{\arraystretch}{1.0}
\begin{tabularx}{0.8\textwidth}{@{}
  p{1.25cm}
  >{\raggedright\arraybackslash}X
  r@{}}
\toprule
Date & Method, reference, and system & Value / bound \\
\midrule
\multicolumn{3}{@{}l}{\emph{Published mathematical constructions}}\\
\addlinespace[2pt]
Unknown
& Conway eight-point MSTD set
  \cite{nathanson2017mstd,hegarty2007explicit}
& $1.0344$ \\

1969
& Marica~\cite{marica1969conway}
& $1.0290$ \\

1973
& Freiman--Pigarev~\cite{freiman1973invariants}
& $1.0598$ \\

2013
& Penman--Wells family
  \cite{penmanWells2013restricted,cutlerPebodySarkar2024}
& $1.1259$ \\

\addlinespace[3pt]
\multicolumn{3}{@{}l}{\emph{Published agent-search results}}\\
\addlinespace[2pt]

2025-11
& AlphaEvolve (Gemini 2.0 Pro + Flash)
  \cite{GeorgievEtAl2025}
& $1.1219$ \\

2025-12
& LoongFlow (DeepSeek-R1-250528)
  \cite{wanEtAl2025loongflow,loongflowRepository}
& $1.1035$ \\

2026-04
& SimpleTES (gpt-oss-120b)~\cite{simpletes2026}
& $1.1440$ \\

2026-04
& SimpleTES with post-training (gpt-oss-120b)
  \cite{simpletes2026}
& $1.1449$ \\

2026-07
& OpenHands (GPT-5.4 medium)
  \cite{zhu2026evomaster}
& $1.0786$ \\

2026-07
& OpenClaw (GPT-5.4 medium)
  \cite{zhu2026evomaster}
& $1.1099$ \\

2026-07
& Codex (GPT-5.4 medium)
  \cite{zhu2026evomaster}
& $1.1121$ \\

2026-07
& EvoMaster (GPT-5.4 medium)~\cite{zhu2026evomaster}
& $1.1207$ \\

\addlinespace[3pt]
\multicolumn{3}{@{}l}{\emph{Exploratory runs reported only in this manuscript}}\\
\addlinespace[2pt]
2026-07
& Claude Fable 5
& $1.1133$ \\

2026-07
& Codex (GPT-5.5) with human guidance
& $1.2851$ \\

\addlinespace[3pt]
\multicolumn{3}{@{}l}{\emph{Theorem proved in this work}}\\
\addlinespace[2pt]
2026-07
& Explicit family $A_K$ (this work; Hyra with Hy3)
& $\sup C(A)=2$ \\
\bottomrule
\end{tabularx}
\end{table}

AI tools supported the exploratory and optimization stages of this work; the mathematical claims rest on the explicit construction and self-contained proofs above. 
Following the SimpleTES framework~\cite{simpletes2026}, we used
Hyra~\cite{hyra2026}, powered by the Hy3 model, to optimize finite-set constructions from scratch, raising the best value in its autonomous finite-search trajectory from approximately $1.14$ to $1.21$.

SimpleTES evaluates $C(A)$ from an explicitly enumerated Python list, so memory and enumeration costs limit the size of searchable sets. 
We therefore allowed agents to propose constructions and supporting arguments in natural language.
Using GPT-5.6 Sol as the judge, we ran Hyra for approximately 24 hours, producing the construction underlying the paper.
The LLM-based judgment was used only to guide exploration, not to certify mathematical correctness. 
We then independently checked the construction and proof, corrected the exposition, and prepared the argument presented here manually.

For context, we also tested Claude Fable 5 and Codex (GPT-5.5) with human guidance. Table \ref{tab:search-milestones} places the manuscript-internal runs alongside published constructions and agent-search results. We also used GPT-5.6 Sol to convert natural-language proofs into verifiable Lean 4 formal proofs (\url{https://github.com/linhaowei1/sum-diff-proof}).

\FloatBarrier

\section{Conclusion}\label{sec:conclusion}

We determine the optimal universal exponent relating sumsets and difference
sets. For every positive even integer $K$, our explicit construction satisfies
\[
C(A_K)>\frac{2K}{K+3},
\]
and hence $C(A_K)\to 2$ as $K\to\infty$. Together with the classical inequality $\sigma(A)^{1/2}\leq \delta(A)$, this proves that the least exponent $c$ for which $\sigma(A)\leq \delta(A)^c$ holds for every finite nonempty set $A\subset\mathbb Z$ is exactly $2$.

The supremum is approached but, by Staps' characterization of the equality cases, is not attained by any integer set with at least two elements \cite{Staps2015}. 

\clearpage
\section*{Acknowledgments}

We thank Letian Huang and Shuo Zhou from Peking University, Baihe Huang from UC Berkeley, and Honghao Lin from Carnegie Mellon University for helpful discussions. We also thank the Hyra team at Tencent Hunyuan for their work on Hyra.

\FloatBarrier

\bibliographystyle{amsplain}
\bibliography{reference}

\end{document}